\theoremstyle{thmstyleone}%
\theoremstyle{thmstyletwo}%
\theoremstyle{thmstylethree}%
\newtheorem{definition}{Definition}%
\newtheorem{theorem}{Theorem}[section]
\newtheorem{proposition}{Proposition}[section]
\newtheorem{lemma}{Lemma}[theorem]
\DeclareMathOperator{\sign}{sign}
\DeclareMathOperator{\E}{\mathbb{E}}
\DeclareMathOperator{\cond}{\left.\right|}
\begin{document}

\title[Mixed Poisson distributions]{Mixed Poisson families with real-valued mixing distributions}

\author*[1]{\fnm{F. William} \sur{Townes}}\email{ftownes@andrew.cmu.edu}

\affil*[1]{\orgdiv{Department of Statistics and Data Science}, \orgname{Carnegie Mellon University}, \orgaddress{\street{5000 Forbes Ave}, \city{Pittsburgh}, \postcode{15213}, \state{PA}, \country{USA}}}

\abstract{Mixed Poisson distributions provide a flexible approach to the analysis of count data with overdispersion, zero inflation, or heavy tails. Since the Poisson mean must be nonnegative, the mixing distribution is typically assumed to have nonnegative support. We show this assumption is unnecessary and real-valued mixing distributions are also possible. Informally, the mixing distribution merely needs to have a light (subexponential) left tail and a small amount of probability mass on negative values. We provide several concrete examples, including the mixed Poisson- extreme stable family, where the mixing distribution has a power law tail.}

\keywords{mixed Poisson, probability generating functions, stable distributions}

%%\pacs[JEL Classification]{D8, H51}

\pacs[MSC Classification]{60E05, 62E10}

\maketitle

\section{Introduction}\label{sec1}

Mixed Poisson families \cite{johnsonUnivariateDiscreteDistributions2005,grandellMixedPoissonProcesses2020} are widely used to model count data with overdispersion, zero inflation, or heavy tails in a variety of applications including finance \cite{wuenscheUsingMixedPoisson2007,fariaFinancialDataModeling2013,heaukulaniModellingFinancialVolume2024}, biology \cite{el-shaarawiModellingSpeciesAbundance2011,loveModeratedEstimationFold2014,townesQuantileNormalizationSinglecell2020}, and the physical sciences \cite{kitauraRecoveringNonlinearDensity2010,tarnowskyFirstStudyNegative2013,taroniHowManyStrong2023}. Suppose $X\sim F$ is a random variable and $[Y\cond X]$ follows the Poisson distribution with $\E[Y\cond X] = X$. We say that (marginally) $Y$ has a mixed Poisson distribution after integrating out $X$. A prominent example is the negative binomial distribution, which is a mixed Poisson where $X$ is gamma distributed. It seems natural to assume that $X$ must be restricted to have nonnegative support. However, this is in fact unnecessary; the Hermite distribution is obtained by allowing $X\sim\mathcal{N}(\mu,\sigma^2)$, so long as $\mu\geq\sigma^2$ \cite{kempAlternativeDerivationHermite1966}. 

Here, we identify general conditions under which real-valued mixing distributions can be used to produce mixed Poisson distributions. As a motivating example, consider the stable distributions \cite{nolan:2018}, which are the limits of rescaled sums of potentially heavy-tailed random variables. The stable family has two key parameters: $\alpha\in (0,2]$ controls the heaviness of the tails and $\beta\in[-1,1]$ controls the skew. When $\beta=1$, the distribution is maximally skewed to the right. In this case, for $\alpha\in (0,1)$ the left tail can be bounded at zero. Such ``positive stable'' distributions can straightforwardly be used to create mixed Poisson distributions with heavy tails. However, for $\beta=1$ and $\alpha \in [1,2]$ the support is $\mathbb{R}$. 
% The normal distribution is stable with index $\alpha=2$. For $1<\alpha<2$ the variance is infinite, and for $0<\alpha\leq 1$ the mean is infinite. 
% The stable family has a skew parameter $\beta\in[-1,1]$. 
% When $\beta=0$ the distribution is symmetric, for example the Cauchy distribution is stable with $\alpha=1,\beta=0$. 
% In the case that a stable distribution is totally skewed to the right ($\beta=1$), for $\alpha\in(0,1)$ the support is bounded on the left. If the bound occurs at zero we refer to this as the positive stable family. The mixed Poisson positive stable family may then be used for modeling count data with tails that decay as $x^{-\alpha}$. In the case of $1\leq \alpha < 2$, however, stable random variables have support on $\mathbb{R}$, even if $\beta=1$. 
This would seem to disqualify these ``extreme stable'' distributions as candidates for producing Poisson mixtures. But the Gaussian distribution is stable with $\alpha=2$ and \cite{kempAlternativeDerivationHermite1966} showed it can be used to produce a Poisson mixture. We are then left with a curious gap of $\alpha\in [1,2)$ in the possible values for heavy-tailed count modeling. We will show that in fact mixed Poisson extreme stable distributions are well-defined for the full range of $\alpha\in(0,2]$, so long as $\beta=1$ and certain constraints on location and scale are satisfied. Of course, our results encompass non-stable families as well.

\section{Preliminaries}
\subsection{Subweibull random variables and exponential tilting}
\begin{definition}
\label{def:laplace-transform}
The bilateral Laplace-Stieltjes transform (BLT) of a random variable $X$ with distribution function $F$ is
\[\mathcal{L}_X(t)= \E[\exp(-tX)]=\int_{-\infty}^\infty \exp(-tx)dF(x)\]
\end{definition}
We do not restrict $X$ to be nonnegative or to have a density function. In the special case that $\mathcal{L}_X(t)<\infty$ for all $t$ in an open interval around $t=0$, then $X$ has a moment generating function (MGF) which is $M_X(t)= \E[\exp(tX)]=\mathcal{L}_X(-t)$. 
%The Laplace transform can characterize the distribution even if the MGF does not exist.

\begin{definition}
\label{def:subweibull}
A random variable $X$ is \textit{q-subweibull} if $\E[\exp(\lambda^q \vert X\vert^q)]<\infty$ for some $\lambda>0$ and $q>0$. $X$ is \textit{strictly q-subweibull} if the condition is satisfied for all $\lambda>0$. 
\end{definition}

We use the term \textit{subexponential} to refer to 1-subweibull random variables (i.e. those having MGFs).

% \begin{definition}
% \label{def:radius-convergence}
% The \textit{radius of convergence} of a q-subweibull random variable $X$ is defined by
% \[R_q = \sup \left\{\lambda>0:\E[\exp(\lambda^q \vert X\vert^q)]<\infty\right\}\]
% and if no such $\lambda>0$ exists we adopt the convention that $R_q=0$.
% \end{definition}

\begin{definition}
\label{def:exp-tilt}
Let $X$ be a random variable with distribution function $F$. If the BLT satisfies $\mathcal{L}_X(-\theta)=\E[\exp(\theta X)]<\infty$ for some $\theta\neq 0$, then the \textit{exponentially tilted distribution} is given by
\[F_\theta(x) = \int_{-\infty}^x \frac{\exp(\theta t)}{\mathcal{L}_X(-\theta)}dF(t)\]
\end{definition}

\begin{proposition}
\label{prop:exp-tilt}
Preservation of subweibull tails under exponential tilting. Let $\theta$ be any real number.
\begin{enumerate}
 \item If $X\sim F$ is q-subweibull ($q>1$), then the exponentially tilted variable $Z\sim F_\theta$ is also q-subweibull.
 \item If $X\sim F$ is strictly q-subweibull ($q\geq1$), the exponentially tilted variable $Z\sim F_\theta$ is also strictly q-subweibull.
 \item If $X\sim F$ is not q-subweibull ($q>1$), then $Z\sim F_\theta$ is also not q-subweibull.
\end{enumerate}
\end{proposition}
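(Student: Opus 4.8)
The plan is to control the tilted moment generating functional $\E[\exp(\lambda^q|Z|^q)]$ by writing it back in terms of $X$ and absorbing the linear tilt factor into the superlinear subweibull factor. By Definition 2.4, for any measurable $g\ge 0$ we have $\E[g(Z)] = \mathcal{L}_X(-\theta)^{-1}\,\E[g(X)\exp(\theta X)]$, so everything reduces to bounding $\E[\exp(\lambda^q|X|^q)\exp(\theta X)]$. The key elementary fact is that for $q>1$ the function $|x|^q$ dominates any linear function up to an additive constant: for every $\lambda>0$ and every real $\theta$ there is a finite constant $C=C(\lambda,\theta,q)$ with $\theta x \le \tfrac12\lambda^q|x|^q + C$ for all $x\in\mathbb{R}$ (this is just Young's inequality, or compactness plus the fact that $|x|^q/|x|\to\infty$).

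First I would prove part (1). Given that $X$ is $q$-subweibull, fix $\lambda_0>0$ with $\E[\exp(\lambda_0^q|X|^q)]<\infty$. Set $\lambda = \lambda_0/2^{1/q}$, so $\lambda^q = \tfrac12\lambda_0^q$. Using the inequality above with this $\lambda$ we get
\[
\exp(\lambda^q|X|^q)\exp(\theta X) \;\le\; \exp\!\bigl(\lambda^q|X|^q + \tfrac12\lambda_0^q|X|^q + C\bigr) \;=\; e^{C}\exp(\lambda_0^q|X|^q),
\]
which is integrable by assumption; dividing by $\mathcal{L}_X(-\theta)$ (finite and positive, since $X$ is subexponential when $q>1$, or since $\theta$ lies in the MGF interval) shows $\E[\exp(\lambda^q|Z|^q)]<\infty$, so $Z$ is $q$-subweibull. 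For part (2), strict $q$-subweibull-ness means $\E[\exp(\lambda^q|X|^q)]<\infty$ for \emph{all} $\lambda>0$; given an arbitrary target $\lambda>0$ I would apply the same argument but split $\lambda_0^q$ as $\lambda^q$ plus whatever is needed to absorb the tilt — concretely pick $\mu>\lambda$, use Young's inequality to get $\theta x \le (\mu^q-\lambda^q)|x|^q + C$, and conclude $\E[\exp(\lambda^q|Z|^q)] \le e^{C}\mathcal{L}_X(-\theta)^{-1}\E[\exp(\mu^q|X|^q)]<\infty$. Here one must note the boundary case $q=1$ is allowed: then the linear term cannot be absorbed into $|x|$ with a small constant, but strictness saves us — $\E[\exp(\mu|X|)]<\infty$ for the specific $\mu=\lambda+|\theta|$ suffices, since $\exp(\lambda|X|+\theta X)\le\exp((\lambda+|\theta|)|X|)$.

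For part (3), I would argue by contraposition: the tilting operation is an involution up to reparametrization. If $Z\sim F_\theta$, then tilting $Z$ by $-\theta$ recovers $X\sim F$, i.e. $(F_\theta)_{-\theta} = F$, which follows by direct substitution in Definition 2.4 (the normalizing constants multiply to one). Hence if $Z$ were $q$-subweibull for some $q>1$, applying part (1) to $Z$ with tilt parameter $-\theta$ would force $X$ to be $q$-subweibull, contradicting the hypothesis; therefore $Z$ is not $q$-subweibull.

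I expect the main obstacle to be bookkeeping rather than depth: making sure $\mathcal{L}_X(-\theta)$ is finite and nonzero so the tilted measure is well-defined (for $q>1$ a $q$-subweibull variable automatically has a full MGF, so every $\theta$ is admissible; this should be stated explicitly), and handling the $q=1$ boundary in part (2) where Young's inequality degenerates and one genuinely needs the "for all $\lambda$" strictness. The involution identity in part (3) also deserves a one-line verification that $\mathcal{L}_Z(\theta)=\mathcal{L}_X(0)/\mathcal{L}_X(-\theta)$-type constants cancel correctly.
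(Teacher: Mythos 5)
The paper does not actually prove Proposition~\ref{prop:exp-tilt}; it defers the proof entirely to the companion reference \cite{townesExponentialTiltingSubweibull2024}, so there is no in-paper argument to compare against. Judged on its own, your proof is correct and self-contained. The core mechanism --- writing $\E[g(Z)]=\mathcal{L}_X(-\theta)^{-1}\E[g(X)e^{\theta X}]$ and absorbing the linear tilt into the superlinear exponent via $\theta x\le \epsilon|x|^q+C(\epsilon,\theta,q)$ for $q>1$ --- is exactly the right tool, and your choice $\lambda^q=\tfrac12\lambda_0^q$ in part (1) and $\epsilon=\mu^q-\lambda^q$ in part (2) makes the bookkeeping work. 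You correctly flag the two delicate points: that $\mathcal{L}_X(-\theta)=\E[e^{\theta X}]$ is automatically finite and positive for every $\theta$ when $q>1$ (so the tilt is always admissible), and that at $q=1$ Young's inequality degenerates so one must invoke strictness with the specific exponent $\lambda+|\theta|$. The involution argument for part (3) is also sound: $\mathcal{L}_Z(\theta)=\mathcal{L}_X(-\theta)^{-1}$, so the normalizing constants cancel, $(F_\theta)_{-\theta}=F$, and part (1) applied to $Z$ with tilt $-\theta$ yields the contrapositive; the only hypothesis needed there, finiteness of $\E[e^{-\theta Z}]$, is exactly what the cancellation identity supplies. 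I see no gap.
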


The proof of Proposition \ref{prop:exp-tilt}, along with further discussion of exponential tilting and subweibull properties, is provided in \cite{townesExponentialTiltingSubweibull2024}. See also \cite{kuchibhotlaMovingSubGaussianityHighdimensional2022,vladimirovaSubWeibullDistributionsGeneralizing2020}.

\subsection{Mixed Poisson distributions}

\begin{definition}
\label{def:valid-pmf}
A function $f(n)$ on the domain $\mathbb{N}_0$ is a valid probability mass function (PMF) if it satisfies
\begin{enumerate}[label=(\roman*)]
 \item $f(n)\geq 0$ for all $n\in\mathbb{N}_0$
 \item $\sum_{n=0}^\infty f(n) = 1$. 
\end{enumerate}
\end{definition}

% \begin{definition}
% \label{def:valid-pgf}
% A function $G(z)$ is a valid probability generating function (PGF) if it satisfies
% \begin{enumerate}[label=(\roman*)]
%  \item $G(1^-)=\lim_{z\uparrow 1} G(z)=1$ 
%  \item $G(z)$ is infinitely differentiable for all $\vert z\vert<1$
%  \item derivative $G^{(n)}(0)\geq 0$ for all $n\in \mathbb{N}_0$.
% \end{enumerate}
% \end{definition}

\begin{definition}
\label{def:pgf}
The probability generating function (PGF) of a discrete nonnegative random variable $X$ is given by
\[G_X(z)=\E[z^X] = \sum_{n=0}^\infty z^n\Pr(X=n)\]
\end{definition}

We will need to verify whether a given function is a PGF for some random variable. 

\begin{lemma}
\label{lem:valid-pgf}
An analytic function $G(z)$ is a valid PGF if and only if it satisfies
\begin{enumerate}[label=(\roman*)]
 \item $G(1)=1$
 \item $G(z)$ is continuous for $z\in[0,1]$
 \item Absolute monotonicity: finite derivatives $G^{(k)}(z)\geq 0$ for all $z\in (0,1)$
\end{enumerate}
\end{lemma}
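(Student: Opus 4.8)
The plan is to prove the characterization of valid PGFs as a standard fact about power series with nonnegative coefficients, using Abel's theorem to handle the boundary at $z=1$. First I would establish the forward direction: if $G(z) = \sum_{n=0}^\infty z^n \Pr(X = n)$ for a nonnegative integer-valued random variable $X$, then $G(1) = \sum_n \Pr(X=n) = 1$ gives (i); the power series has radius of convergence at least $1$ (since the coefficients sum to a finite value), so it is continuous on $[0,1)$ and left-continuous at $1$ by Abel's theorem, giving (ii); and term-by-term differentiation of a power series inside its radius of convergence yields $G^{(k)}(z) = \sum_{n \geq k} n(n-1)\cdots(n-k+1) z^{n-k}\Pr(X=n) \geq 0$ for $z \in (0,1)$, giving (iii).

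For the converse, suppose $G$ satisfies (i)--(iii). The key step is to recover the coefficients. Because $G$ is absolutely monotonic on $(0,1)$, all its derivatives at points in $(0,1)$ are nonnegative; I would fix any $z_0 \in (0,1)$ and note $G$ is real-analytic there (a function with all derivatives nonnegative on an interval is analytic — this is Bernstein's theorem on absolutely monotonic functions), so $G(z) = \sum_{n=0}^\infty a_n z^n$ on a neighborhood, and in fact the Taylor expansion about $0$ has $a_n = G^{(n)}(0^+)/n! \geq 0$ by taking limits of $G^{(n)}(z) \geq 0$ as $z \downarrow 0$ (using continuity/monotonicity of the derivatives). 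Set $\Pr(X = n) := a_n \geq 0$. These are nonnegative by construction, so condition (i) of Definition~\ref{def:valid-pmf} holds. For condition (ii), I would use monotone convergence: for $z \in (0,1)$, $\sum_{n=0}^\infty a_n z^n = G(z)$, and letting $z \uparrow 1$, the left side increases to $\sum_n a_n$ while the right side tends to $G(1) = 1$ by the assumed continuity on $[0,1]$; hence $\sum_n a_n = 1$ and $f(n) = a_n$ is a valid PMF defining a random variable $X$ whose PGF is $G$.

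The main obstacle is the converse direction's analyticity step: absolute monotonicity on the open interval $(0,1)$ must be leveraged to guarantee that $G$ actually equals its Taylor series (a priori a $C^\infty$ function need not be analytic), and that the relevant one-sided derivatives at the endpoint $0$ exist and are nonnegative. I would invoke the classical Bernstein--Widder theory of absolutely monotonic functions (e.g., Widder's \emph{The Laplace Transform}, Chapter IV), which states precisely that a function absolutely monotonic on an interval $(a,b)$ extends analytically and is represented by a convergent power series with nonnegative coefficients; combined with (ii) to pin down behavior at the endpoints, this closes the argument. A secondary subtlety is matching conventions: the statement writes $G^{(k)}(z) \geq 0$ for $z \in (0,1)$ rather than including $G(z) \geq 0$ itself, but nonnegativity of $G$ on $(0,1)$ follows since $G$ is increasing (as $G' \geq 0$) toward $G(1) = 1$ and bounded below by $\lim_{z\downarrow 0} G(z) \geq 0$, so no generality is lost.
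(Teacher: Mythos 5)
Your proof is correct and follows essentially the same route as the paper's cited source: the paper defers entirely to Feller (Vol.~II, p.~223), whose argument is precisely the forward direction via term-by-term differentiation plus the Bernstein--Widder theorem that an absolutely monotonic function on $(0,1)$ is represented by a power series with nonnegative coefficients, with continuity at $z=1$ pinning down $\sum_n a_n = 1$. You correctly identify the analyticity step as the crux and handle it with the same classical theorem, so there is no substantive difference from the reference.
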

The proof is provided on p. 223 of \cite{fellerIntroductionProbabilityTheory1971}. 
We use notation from \cite{gurlandInterrelationsCompoundGeneralized1957} to concisely describe mixed Poisson distributions.

\begin{definition}
\label{def:mixed}
Let $X$ be a random variable with distribution $F$. If random variable $Y$ has conditional distribution $[Y\cond X]\sim Poi(X)$, then the marginal distribution of $Y$ is the \textit{mixed Poisson distribution} generated by $F$, denoted with
\[Y\sim \left(Poi \bigwedge F\right)\]
and having PMF
\begin{equation}
\label{eq:mpoi-pmf-simple}
f(n)=\frac{\E\left[X^n e^{-X}\right]}{n!}
\end{equation}
provided $f(n)$ is a valid PMF.
\end{definition}

\begin{lemma}
\label{lem:pgf-mpoi}
The PGF of a mixed Poisson random variable $Y\sim \left(Poi\bigwedge F\right)$ is
\[G_Y(z) = \mathcal{L}_X(1-z)\]
where $\mathcal{L}_X(t)$ is the BLT of the mixing distribution $F$, provided $G_Y(z)$ is a valid PGF.
\end{lemma}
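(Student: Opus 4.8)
The plan is to evaluate $G_Y(z)=\sum_{n=0}^\infty z^n f(n)$ directly from the mixed-Poisson PMF \eqref{eq:mpoi-pmf-simple} and to recognize the resulting series as a bilateral Laplace transform. Formally the computation is a one-liner: inserting $f(n)=\E[X^n e^{-X}]/n!$ and exchanging the sum with the expectation gives $G_Y(z)=\E\bigl[e^{-X}\sum_{n\ge 0}(zX)^n/n!\bigr]=\E[e^{-X}e^{zX}]=\E[e^{-(1-z)X}]=\mathcal{L}_X(1-z)$. When $X\ge 0$ almost surely this is just the tower property applied to the Poisson PGF $\E[z^Y\mid X]=e^{-(1-z)X}$, and there is nothing more to say. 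The substance of the lemma is justifying the interchange of summation and expectation when $X$ is allowed to be negative, because then the terms $z^n X^n e^{-X}/n!$ alternate in sign on $\{X<0\}$ and the Poisson-conditioning shortcut is unavailable.

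I would argue via dominated convergence. Fix $z\in[0,1]$ and set $S_N(z)=\sum_{n=0}^N z^n f(n)=\E\bigl[e^{-X}\sum_{n=0}^N (zX)^n/n!\bigr]$, where pulling the \emph{finite} sum out of the expectation is just linearity (each $\E[X^n e^{-X}]$ being a finite real number, since $f$ is assumed to be a valid PMF). The integrand converges pointwise to $e^{-X}e^{zX}=e^{-(1-z)X}$, and for every $N$ it is bounded by $e^{-X}e^{|zX|}\le e^{-X+|X|}=e^{2X^-}$, writing $X^-=\max(-X,0)$. Granting $\E[e^{2X^-}]<\infty$, dominated convergence gives $S_N(z)\to\E[e^{-(1-z)X}]=\mathcal{L}_X(1-z)$; since $S_N(z)\to G_Y(z)$ as well (the PGF series converging because $f$ is a PMF), the two limits agree and the identity is proved on all of $[0,1]$.

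The one real obstacle is the bound $\E[e^{2X^-}]<\infty$, and the key point is that it is already implied by the standing hypothesis that $f$ is a valid PMF. Indeed, $f(2k+1)\ge 0$ is exactly the inequality $\E[(X^-)^{2k+1}e^{X^-}]\le \E[(X^+)^{2k+1}e^{-X^+}]$; weighting by $1/(2k+1)!$, summing over $k\ge 0$, and applying Tonelli on each nonnegative side yields $\E\bigl[\tfrac12(e^{2X^-}-1)\bigr]=\E[e^{X^-}\sinh X^-]\le\sum_{n\ge 0}\E[(X^+)^n e^{-X^+}]/n!=\E[1]=1$, so $\E[e^{2X^-}]\le 3$. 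This is also thematically satisfying: it shows PMF validity cannot hold unless $X$ has a light left tail, which is precisely the hypothesis the rest of the paper builds on. (Alternatively one may simply invoke the stated proviso that $G_Y$ is a valid PGF, but extracting the tail bound from PMF validity keeps the argument self-contained.)
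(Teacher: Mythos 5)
Your proof is correct, and it is actually more careful than the paper's own argument, which is the purely formal computation $G_Y(z)=\E[\E[z^Y\mid X]]=\E[\sum_n z^nX^ne^{-X}/n!]=\E[e^{-(1-z)X}]$ with the interchange of sum and expectation left unjustified (and with a tower-property step that is itself only formal when $\Pr(X<0)>0$, since $X^ne^{-X}/n!$ is then a signed rather than a genuine conditional PMF). You start instead from the honest definition $G_Y(z)=\sum_n z^nf(n)$ with $f$ as in Equation \ref{eq:mpoi-pmf-simple}, and you supply exactly the missing ingredient: dominated convergence with dominating function $e^{2X^-}$, together with the observation that $\E[e^{2X^-}]\le 3$ is already forced by nonnegativity of the odd-index PMF values (via $\E[e^{X^-}\sinh X^-]\le\E[e^{-X^+}e^{X^+}]=1$ and Tonelli on each nonnegative side). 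That last step is a nice bonus beyond what the lemma asks for: it recovers, directly from PMF validity, the light-left-tail necessary condition that the paper only establishes later in Theorem \ref{thm:mpoi-pmf-nec}(2), and it makes the lemma self-contained rather than reliant on the ``provided $G_Y$ is a valid PGF'' escape clause. The only cost is length; the paper's five-line computation conveys the identity, while yours proves it.
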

\begin{proof}
\begin{align*}
G_Y(z) = \E\big[\E[z^Y\cond X]\big] &= \E\left[\sum_{n=0}^\infty \frac{z^n X^n \exp(-X)}{n!}\right]\\
&= \E\left[\exp(-X)\sum_{n=0}^\infty \frac{(zX)^n}{n!}\right]= \E\left[\exp(-X+zX)\right]= \mathcal{L}_X(1-z)
\end{align*}
\end{proof}

% We use the term compound distribution to refer to stopped sums. Let $X_1,X_2,\ldots\sim F$ be iid random variables with PGF $G_1(z)=\E[z^{X_n}]$ and $\Pr(X_n=0)=0$. If $N\sim Poi(\lambda)$ its PGF is $G_2(z)=\exp(\lambda(z-1))$, then $Y=\sum_{n=0}^N X_n$ has PGF $G_Y(z) = G_2(G_1(z)) = \exp\big(\lambda(G_1(z)-1)\big)$. 

% \begin{definition}
% \label{def:compound}
% Let $G(z)$ be a PGF with $F$ the corresponding distribution function. If the random variable $Y$ has a PGF that satisfies 
% \[G_Y(z) = \exp\big(\lambda(G(z)-1)\big)\]
% then $Y$ follows the \textit{compound Poisson distribution} generated by $F$, denoted with 
% \[Y\sim \left(Poi\bigvee F\right)\]
% \end{definition}

% \begin{definition}
% \label{def:compound}
% Let $F_1$ and $F_2$ be two distribution functions with corresponding PGFs $G_1$ and $G_2$. The random variable $Y$ with PGF $G_Y(z) = G_2(G_1(z))$ is called the \textit{compound} $\mathit F_2$ \textit{distribution} generated by $F_1$, denoted with
% \[F_2 \bigvee F_1\]
% \end{definition}
% In the previous example, if $N\sim Poi(\lambda)$ then $Y\sim Poi\bigvee F_1$ and $G_Y(z) = \exp(\lambda(G_1(z)-1))$. 

\section{General conditions for real-valued mixing distributions}

We will first give conditions based on the probability mass function (PMF). 

\subsection{Sufficient conditions}
We will refer to $f(n)$ as in Equation \ref{eq:mpoi-pmf-simple} as a \textit{candidate PMF} for some candidate mixing variable $X\sim F$ under consideration. The mixture $Poi\bigwedge F$ exists if $f(n)$ is a valid PMF.
It is well known that $\Pr(X<0)=0$ is a sufficient condition.

\begin{lemma}
\label{lem:nonneg-not-nec}
Let $X\sim F$ be a random variable. Then $\Pr(X<0)=0$ is a sufficient but not necessary condition for the existence of the mixed distribution $Poi \bigwedge F$.
\end{lemma}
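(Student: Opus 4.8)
The plan is to split the statement into its two halves and handle each separately. For sufficiency, suppose $\Pr(X<0)=0$. Then $X^n e^{-X}\geq 0$ almost surely for every $n\in\mathbb{N}_0$, so $\E[X^n e^{-X}]\geq 0$ and hence $f(n)=\E[X^n e^{-X}]/n!\geq 0$, verifying condition (i) of Definition~\ref{def:valid-pmf}. For condition (ii), I would sum the series and interchange expectation and summation (justified by Tonelli's theorem, since all terms are nonnegative): $\sum_{n\geq 0} f(n) = \E\bigl[e^{-X}\sum_{n\geq 0} X^n/n!\bigr] = \E[e^{-X}e^{X}] = \E[1] = 1$. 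Thus $f$ is a valid PMF and $Poi\bigwedge F$ exists. (Equivalently, one could invoke Lemma~\ref{lem:pgf-mpoi}: with $X\geq 0$, $\mathcal{L}_X(1-z)$ is finite for $z\in[0,1]$ and manifestly absolutely monotone there, so Lemma~\ref{lem:valid-pgf} applies.)

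For the ``not necessary'' half, the cleanest route is to exhibit a single concrete counterexample: a distribution $F$ with $\Pr(X<0)>0$ for which $Poi\bigwedge F$ nonetheless exists. The Hermite distribution already mentioned in the introduction is exactly such an example — take $X\sim\mathcal{N}(\mu,\sigma^2)$ with $0<\sigma^2\leq\mu$ (so that $\Pr(X<0)>0$ since the Gaussian has full support). I would then verify that the resulting PMF is valid, most efficiently via the PGF: by Lemma~\ref{lem:pgf-mpoi}, $G_Y(z)=\mathcal{L}_X(1-z)=M_X(-(1-z))=\exp\bigl(-\mu(1-z)+\tfrac{\sigma^2}{2}(1-z)^2\bigr)$, which one checks satisfies the three conditions of Lemma~\ref{lem:valid-pgf} precisely when $\mu\geq\sigma^2$ (the derivative computation reduces to showing a certain polynomial-times-exponential is nonnegative on $(0,1)$, and the constraint $\mu\geq\sigma^2$ is what makes the linear factor appearing in $G_Y'(z)$ nonnegative there). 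Citing \cite{kempAlternativeDerivationHermite1966} lets me assert this without grinding through it.

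I would structure the write-up as: (1) one short paragraph proving sufficiency; (2) one short paragraph pointing to the Hermite example, stating the parameter constraint, and either citing \cite{kempAlternativeDerivationHermite1966} or giving the one-line PGF check, to establish non-necessity. The main (and really only) obstacle is the non-necessity direction, and even that is not so much an obstacle as a choice of exposition: whether to lean entirely on the cited Hermite result or to include a self-contained verification of its PGF. Since the Hermite case is revisited in detail later in the paper anyway, I expect a brief citation-based argument suffices here, with the detailed verification deferred.
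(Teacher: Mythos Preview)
Your proposal is correct and matches the paper's own proof essentially line for line: sufficiency via nonnegativity of $X^n e^{-X}$ plus Tonelli for the sum, and non-necessity via the Hermite example $X\sim\mathcal{N}(\mu,\sigma^2)$ with $\mu\geq\sigma^2$, citing \cite{kempAlternativeDerivationHermite1966}. The paper simply states the Hermite PGF and cites the reference rather than verifying absolute monotonicity, so your instinct to keep that part brief is right.
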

\begin{proof}
Sufficiency: the PMF $f(n)\geq 0$ for all $n$ since all terms in Equation \ref{eq:mpoi-pmf-simple} are nonnegative. And
\[\sum_n f(n) = \E\left[\sum_n \frac{X^n e^{-X}}{n!}\right] = 1\]
where the interchange of sum and expectation is justified by Tonelli's Theorem.
Lack of necessity: If $X\sim \mathcal{N}(\mu,\sigma^2)$ with $\mu\geq\sigma^2$, then $Poi \bigwedge F$ is the Hermite distribution \cite{kempAlternativeDerivationHermite1966} with PGF
\[G_Y(z)=\exp\left[a_1 (z-1) + a_2 (z^2-1)\right]\]
where $a_2=\sigma^2/2$ and $a_1=\mu-\sigma^2$. Since $\Pr(X<0)>0$, this serves as a counterexample.
\end{proof}

This motivates us to consider less restrictive sufficient conditions that still hold when $\Pr(X<0)>0$.

\begin{theorem}
\label{thm:mpoi-pmf-suff}
Let $X$ be a random variable with distribution $F$. Define nonnegative random variables $A=[-X\cond X<0]$ and $B=[X\cond X\geq 0]$. The following conditions are sufficient to establish the existence of mixed distribution $Poi\bigwedge F$ when $\Pr(X<0)>0$.
\begin{enumerate}[label=(\roman*)]
\item for all odd $n\in\mathbb{N}$
% \begin{equation}
% \label{eq:valid-mpoi-pmf2}
\[\E\left[A^n e^{A}\right]\Pr(X<0)\leq \E\left[B^n e^{-B}\right]\Pr(X\geq 0)\]
% \end{equation}
\item $\E[\exp(2 A)]<\infty$
\end{enumerate}
The PMF of $Y\sim \left(Poi\bigwedge F\right)$ is then given by
\begin{equation}
\label{eq:mpoi-pmf-complex}
f(n) = \frac{(-1)^n\E\left[A^n e^{A}\right]\Pr(X<0)+\E\left[B^n e^{-B}\right]\Pr(X\geq 0)}{n!}
\end{equation}
\end{theorem}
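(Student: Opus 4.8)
The plan is to verify that the candidate function in Equation \ref{eq:mpoi-pmf-complex} satisfies the two requirements of Definition \ref{def:valid-pmf}, namely nonnegativity and summation to one. The starting point is to split the expectation $\E[X^n e^{-X}]$ in Equation \ref{eq:mpoi-pmf-simple} according to the sign of $X$. Writing $p = \Pr(X<0)$ and conditioning, we get $\E[X^n e^{-X}] = \E[X^n e^{-X}\mid X<0]\,p + \E[X^n e^{-X}\mid X\geq 0]\,(1-p)$. On the event $X<0$ we substitute $X = -A$ with $A\geq 0$, so $X^n e^{-X} = (-1)^n A^n e^{A}$; on the event $X\geq 0$ we have $X = B$ and $X^n e^{-X} = B^n e^{-B}$. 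This yields exactly the numerator of Equation \ref{eq:mpoi-pmf-complex}, so $f(n) = \E[X^n e^{-X}]/n!$ and the candidate PMF agrees with the one in Definition \ref{def:mixed}; what remains is to show it is valid.

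For nonnegativity, I would consider even and odd $n$ separately. When $n$ is even, $(-1)^n = 1$ and every term in the numerator is a product of nonnegative quantities, so $f(n)\geq 0$ automatically — condition (i) is not needed here. When $n$ is odd, $(-1)^n = -1$, and nonnegativity of the numerator is precisely the inequality $\E[A^n e^{A}]\,\Pr(X<0) \leq \E[B^n e^{-B}]\,\Pr(X\geq 0)$, which is hypothesis (i). One subtlety to address: these manipulations are only meaningful if the relevant expectations are finite. Hypothesis (ii), $\E[\exp(2A)]<\infty$, is what guarantees this: since $A^n \leq n!\,e^{A}$ (or more crudely $A^n e^{A}\leq C_n e^{2A}$ for a suitable constant, using $A^n \leq n!\, e^{A}$), we get $\E[A^n e^{A}] \leq n!\,\E[e^{2A}] < \infty$ for every $n$, so the negative contributions are all finite and the term-by-term conditioning is legitimate. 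The terms $\E[B^n e^{-B}]$ are bounded by $\sup_{b\geq 0} b^n e^{-b} = (n/e)^n < \infty$, so no extra assumption is required on the positive side.

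For the normalization, I would sum Equation \ref{eq:mpoi-pmf-complex} over $n\in\mathbb{N}_0$ and interchange the sum with the expectations. On the positive side, $\sum_n B^n e^{-B}/n! = 1$ pointwise, and the interchange is justified by Tonelli since all terms are nonnegative, giving a contribution of $\Pr(X\geq 0)$. On the negative side, $\sum_n (-1)^n A^n e^{A}/n! = e^{A}\sum_n (-A)^n/n! = e^{A} e^{-A} = 1$ pointwise, but now the terms alternate in sign, so I need dominated convergence rather than Tonelli; the dominating function is $\sum_n A^n e^{A}/n! = e^{2A}$, which is integrable by hypothesis (ii). Hence the negative side contributes $\Pr(X<0)$, and the total is $\Pr(X<0) + \Pr(X\geq 0) = 1$.

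The main obstacle is not conceptual but bookkeeping: making sure the interchange of summation and expectation on the negative-$X$ side is rigorously justified (this is exactly where assumption (ii) earns its keep, since Tonelli does not apply to the alternating series), and keeping the conditioning-on-sign arguments clean when $\Pr(X<0)$ or $\Pr(X\geq 0)$ could in principle be extreme. Everything else is a routine split-and-substitute computation.
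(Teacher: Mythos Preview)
Your proposal is correct and follows essentially the same route as the paper: condition on the sign of $X$ to obtain Equation~\ref{eq:mpoi-pmf-complex}, use hypothesis~(i) for nonnegativity at odd $n$, and use hypothesis~(ii) to justify the sum--expectation interchange via $\E[e^{2A}]<\infty$. The only cosmetic difference is that the paper handles normalization by applying Fubini to the full expression $\sum_n \E[X^n e^{-X}/n!]$ after checking $\sum_n \E[|X|^n e^{-X}/n!] = 1 + \E[e^{2A}]<\infty$, whereas you split into the $A$ and $B$ pieces first and invoke dominated convergence on the $A$ side; these are equivalent justifications.
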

\begin{proof}
Equation \ref{eq:mpoi-pmf-complex} is clearly equivalent to Equation \ref{eq:mpoi-pmf-simple}. 
\begin{align*}
f(n) &= \int \frac{x^n e^{-x}}{n!} dF(x) = \frac{\E\left[X^n e^{-X}\right]}{n!}\\
&= \frac{1}{n!}\left(\E\left[X^n e^{-X}\cond X<0\right]\Pr(X<0) + \E\left[X^n e^{-X}\cond X\geq0\right]\Pr(X\geq 0)\right)
\end{align*}
Assume condition (i) holds. This implies $f(n)\geq 0$ for all odd $n$. For even $n$, all terms in Equation \ref{eq:mpoi-pmf-complex} are nonnegative, because $A$ is a positive random variable and $B$ is a nonnegative random variable. This proves $f(n)\geq 0$ for all $n\in \mathbb{N}_0$ as required.

It remains to prove that $\sum_n f(n)=1$. Assume (ii) holds. It is clear that 
\[\E\left[\sum_n \frac{X^n \exp(-X)}{n!}\right] = 1\]
even when $X$ takes on negative values. However, we cannot justify the interchange of expectation and summation with Tonelli's Theorem since $X$ takes on negative values. Therefore, we must appeal to Fubini's Theorem, which requires the following sum to be finite.
\begin{align*}
\sum_n &\E\left[\left\vert \frac{X^n \exp(-X)}{n!}\right\vert\right] = \sum_n \frac{1}{n!}\E\left[\vert X\vert^n e^{-X}\right]\\
&= \sum_n \frac{\E\left[\vert X\vert^n e^{-X}\cond X\geq 0\right]\Pr(X\geq 0) + \E\left[\vert X\vert^n e^{-X}\cond X<0\right]\Pr(X<0)}{n!}\\
&= \sum_n \frac{\E\left[B^n e^{-B}\right]\Pr(X\geq 0) + \E\left[A^n e^{A}\right]\Pr(X<0)}{n!}\\
&= \E\left[\sum_n \frac{B^n \exp(-B)}{n!}\right] + \E\left[\sum_n \frac{A^n \exp(A)}{n!}\right]\\
&= (1) + \E[\exp(2 A)]
\end{align*}
Here the interchange of sum and expectation is justified by Tonelli's Theorem since everything is nonnegative. By the assumption of condition (ii) the expectation in the last line is finite and Fubini's Theorem justifies the conclusion that
\[\sum_n f(n) = \sum_n \E\left[\frac{X^n\exp(-X)}{n!}\right] = \E\left[\sum_n \frac{X^n\exp(-X)}{n!}\right] = 1\]
% Then since $A$ is also nonnegative, it is subexponential with $M_{A}(t)<\infty$ for all $t\leq 2$. In particular $M_{A}(1)=\E[\expA]<\infty$ and we may define the tilted random variable $U\sim F^-_{(1)}$ such that
% \begin{align*}
% \E\left[A^n e^{A}\right] &= \int_0^\infty x^n e^x dF^-(x) = M_{U}(1)\int_0^\infty x^n \frac{e^x}{M_{U}(1)}dF^-(x)\\
% &=M_{U}(1) \int_0^\infty z^n dF^-_{(1)}(z) = M_{U}(1)\E[U^n]
% \end{align*}
\end{proof}
%Throughout, we assume $F^-$ places no probability mass on zero so $A$ is a positive random variable. 

\subsection{Necessary conditions}

Theorem \ref{thm:mpoi-pmf-suff} broadens the landscape of mixed Poisson distributions. To give an idea of how large the space could be, we circumscribe it by providing necessary conditions. Furthermore, in many cases it may be more straightforward to check these conditions than those of Theorem \ref{thm:mpoi-pmf-suff}(i), so as to speedily rule out candidate distributions that are invalid.
% Remark: we do not place any restrictions on $B$, so it is permissible that it may have an arbitrarily heavy tail and infinite moments. The presence of the $e^{-B}$ term ensures the expectation is finite and hence $\Pr(Y=n)<\infty$ for all $n$.

\begin{theorem}
\label{thm:mpoi-pmf-nec}
Let $X$ be a random variable with distribution $F$. Define nonnegative random variables $A=[-X\cond X<0]$ and $B=[X\cond X\geq 0]$. The following conditions are necessary for the mixed distribution $Poi\bigwedge F$ to exist.
\begin{enumerate}
  \item Either $\Pr(X<0)=0$ or $\Pr(X>0)>0$.
  \item If $\Pr(X<0)>0$ then $A$ must be subexponential with $\E[e^{A}]<\infty$.
  \item If $\Pr(X<0)>0$ and $B$ is q-subweibull, then $A$ must also be q-subweibull.
  % \item $A$ must not have a heavier tail than $B$.
\end{enumerate}
\end{theorem}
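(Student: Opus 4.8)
The plan is to derive each of the three conditions directly from the requirement that the candidate PMF $f(n)=\E[X^n e^{-X}]/n!$ be valid — in particular that every $f(n)$ is a well-defined finite number in $[0,1]$ — with condition~1 argued by contradiction. Throughout I will use the decomposition of Equation~\ref{eq:mpoi-pmf-complex}, namely $f(n)=\big((-1)^n\E[A^n e^{A}]\Pr(X<0)+\E[B^n e^{-B}]\Pr(X\geq 0)\big)/n!$, which holds because $e^{-X}=e^{A}$ on $\{X<0\}$ and $e^{-X}=e^{-B}$ on $\{X\geq 0\}$, with $A\geq 0$ and $B\geq 0$.

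For condition~1, suppose instead $\Pr(X<0)>0$ and $\Pr(X>0)=0$, so $X\leq 0$ almost surely with a strictly positive chance of being negative. Since $x e^{-x}\leq 0$ for $x\leq 0$ and $<0$ for $x<0$, the integrand of $f(1)=\E[X e^{-X}]$ is a.s.\ nonpositive and negative with positive probability, so $f(1)$ is a well-defined element of $[-\infty,0)$ and cannot be a probability. For condition~2, validity forces $f(0)=\E[e^{-X}]<\infty$; splitting on the sign of $X$ gives $\E[e^{-X}]=\E[e^{A}]\Pr(X<0)+\E[e^{-B}]\Pr(X\geq 0)$, and since the last summand is nonnegative, $\E[e^{A}]\leq \E[e^{-X}]/\Pr(X<0)<\infty$. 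Because $A\geq 0$, this is exactly $\E[\exp(|A|)]<\infty$, i.e.\ $A$ is subexponential.

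Condition~3 is the substantive one. By condition~1 (already shown to be necessary), $\Pr(X<0)>0$ implies $\Pr(X\geq 0)>0$, so $C:=\Pr(X\geq 0)/\Pr(X<0)$ is finite and $B$ is well-defined. Validity requires each $f(n)$ to be finite and nonnegative; finiteness forces $\E[A^n e^{A}]<\infty$ for all $n$, and nonnegativity for odd $n$ gives $\E[A^n e^{A}]\Pr(X<0)\leq \E[B^n e^{-B}]\Pr(X\geq 0)$. Using $e^{A}\geq 1$ and $e^{-B}\leq 1$, this yields $\E[A^n]\leq C\,\E[B^n]$ for every odd $n$. Since $B$ is $q$-subweibull, the standard moment characterization (obtained from $\Pr(B>t)\leq\E[\exp(\lambda^q B^q)]e^{-\lambda^q t^q}$ via the substitution $u=\lambda^q t^q$, giving $\E[B^n]\leq M\lambda^{-n}\Gamma(n/q+1)$) yields $\E[B^n]^{1/n}=O(n^{1/q})$; hence $\E[A^n]\leq (c\,n^{1/q})^n$ for all odd $n$ and some constant $c$. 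Lyapunov's inequality, i.e.\ monotonicity of $r\mapsto\E[A^r]^{1/r}$, upgrades this to $\E[A^n]\leq(c'\,n^{1/q})^n$ for all $n\geq 1$. Finally, expanding $\E[\exp(\mu^q A^q)]=\sum_{k\geq 0}\mu^{qk}\E[A^{qk}]/k!$ by Tonelli and bounding $\E[A^{qk}]$ with the moment growth just derived (using $A^{qk}\leq 1+A^{\lceil qk\rceil}$ when $qk\notin\mathbb{N}$) together with $k!\geq (k/e)^k$, the sum is dominated by a geometric series that converges for all sufficiently small $\mu>0$; hence $A$ is $q$-subweibull.

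I expect condition~3 to be the only real obstacle. The points requiring care are: the uniform moment-growth bound $\E[B^n]^{1/n}=O(n^{1/q})$ extracted from the $q$-subweibull hypothesis on $B$; bridging the parity gap, so that the odd-$n$ inequalities coming from $f(n)\geq 0$ control \emph{all} moments of $A$ (handled by Lyapunov); and converting the resulting moment bound back into finiteness of $\E[\exp(\mu^q A^q)]$, with the minor bookkeeping for non-integer exponents when $q<1$. Conditions~1 and~2 follow at once by inspecting $f(1)$ and $f(0)$ respectively.
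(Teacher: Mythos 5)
Your proof is correct, and for condition 3 it takes a genuinely different route from the paper. The paper establishes conditions 1 and 2 by essentially the same observations you make (negativity of $f(n)$ for odd $n$ when $X\le 0$ a.s., and divergence of $f(0)$ when $\E[e^{A}]=\infty$), just phrased as contradictions rather than directly. For condition 3, however, the paper argues the contrapositive: assuming $A$ is not $q$-subweibull, it introduces the exponentially tilted variables $U\sim F^-_{(1)}$ and $V\sim F^+_{(-1)}$, invokes Proposition \ref{prop:exp-tilt} to transfer (non-)subweibullness through the tilting, and then runs a $\limsup$ argument along a subsequence of odd integers to exhibit an odd $n$ with $f(n)<0$. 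You instead work directly: the crude bounds $e^{A}\ge 1$ and $e^{-B}\le 1$ convert the odd-$n$ nonnegativity constraints into the clean moment comparison $\E[A^n]\le C\,\E[B^n]$, after which the standard two-way equivalence between $q$-subweibullness and the moment growth $\E[\,\cdot\,^{n}]^{1/n}=O(n^{1/q})$ does all the work, with Lyapunov monotonicity filling in the even moments. Your route is more elementary and self-contained: it avoids the externally cited tilting proposition entirely, handles all $q>0$ uniformly (the paper must first reduce $q\le 1$ to condition 2 and restrict the tilting argument to $q>1$), and replaces the paper's somewhat delicate odd-subsequence $\limsup$ bookkeeping with a one-line Lyapunov step. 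The only cost is that you lean on the moment characterization of subweibull variables in both directions, which is standard and which you sketch correctly, including the minor care needed for non-integer moments when expanding $\E[\exp(\mu^q A^q)]$.
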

\begin{proof}
$(1)$: Suppose $\Pr(X<0)>0$ and $\Pr(X>0)=0$. Then for $n$ odd, 
\[\Pr(Y=n) = \frac{-1}{n!}\E\left[A^n e^{A}\right]\Pr(X<0) + (0)\Pr(X=0)< 0\]
since $A >0$, which violates Definition \ref{def:valid-pmf}(i).

$(2)$: Suppose $A$ is not subexponential. Since $A$ is bounded on the left by zero, we have for all $t\leq 0$,
\[\E[e^{tA}] = \int_{0}^\infty e^{tx} dF^-(x)\leq \int_{0}^\infty (1) dF^-(x) = 1< \infty\]
Therefore it must be that $\E[e^{tA}] = \infty$ for all $t>0$.
\begin{align*}
\Pr(Y=0) &\geq \frac{1}{0!}\E\left[A^0 e^{A}\right]\Pr(X<0)\\
&\geq \E\left[e^{(1)A}\right]\Pr(X<0) = \infty
\end{align*} 
which violates Definition \ref{def:valid-pmf}(ii). Therefore $A$ must be subexponential. By a similar argument if $A$ is subexponential but $\E[e^{(1)A}]=\infty$ then $\Pr(Y=0)=\infty$ which again violates Definition \ref{def:valid-pmf}(ii).

$(3)$: We will show that if $\Pr(X<0)>0$ and $B$ is q-subweibull but $A$ is not, then there exists an odd integer $n$ for which $\Pr(Y=n)$ cannot have a nonnegative value. We have already established in $(2)$ that $A$ must be subexponential, so choose $q>1$. The condition that $\Pr(Y=n)\geq 0$ is equivalent to, for odd $n$:
\begin{align*}
0<\frac{\Pr(X<0)}{\Pr(X\geq 0)} &\leq  \frac{\E\left[B^n e^{-B}\right]}{\E\left[A^n e^{A}\right]}
\end{align*}
Let $F^-$ and $F^+$ denote the distributions of $A$ and $B$, respectively. Since $B$ is nonnegative, the exponentially tilted distribution $F^+_\theta$ exists for all $\theta\leq 0$. Therefore we can define $V\sim F^+_{(-1)}$ so that
\begin{align*}
\E\left[B^n e^{-B}\right] &= \mathcal{L}_{B}(1)\int x^n \frac{\exp(-x)}{\mathcal{L}_{B}(1)}dF^+(x)\\ 
&= K_B\int z^n dF^+_{(-1)}(z)\\ 
&= K_B\E[V^n]
\end{align*}
where we have defined the constant $K_B = \mathcal{L}_{B}(1) = \E[\exp(-B)]<\infty$ for notational convenience.
By Proposition \ref{prop:exp-tilt}, $V$ is also q-subweibull which implies there is some $K_1>0$ such that for all $n\geq 1$,
\begin{align*}
\E[V^n]^{1/n}&\leq K_1 n^{1/q}\\
\E\left[B^n e^{-B}\right] = K_B\E[V^n] &\leq K_B K_1^n n^{n/q}
\end{align*}

Since by assumption $A$ is subexponential with $K_A = M_{A}(1) = \E[\exp(A)]<\infty$, we may define $U$ as a random variable following the exponentially tilted distribution $F^-_{(1)}$ so that 
\begin{align*}
\E\left[A^n e^{A}\right] &= M_{A}(1)\int x^n \frac{\exp(x)}{M_{A}(1)}dF^-(x)\\ 
&= K_A \int z^n dF^-_{(1)}(z)\\ 
&= K_A \E[U^n]
\end{align*}
By Proposition \ref{prop:exp-tilt}, $U$ is not q-subweibull which implies for all $K>0$ there exists some $n\geq 1$ such that $\E[U^n]^{1/n}> K n^{1/q}$. Equivalently,
\[\limsup_{n\to\infty} \frac{\E[U^n]^{1/n}}{n^{1/q}} = \infty \]
This establishes that there must exist an infinite subsequence $n_k$ satisfying $\lim_{k\to\infty} n_k=\infty$ such that
\[\lim_{k\to\infty} \frac{m(n_k)}{n_k^{1/q}} = \infty \]
where $m(n)=\E[U^n]^{1/n}$ is a nondecreasing function of $n$ (it is just the $L_p$ norm sequence). Let $h(n)=m(n)/n^{1/q}$. Then
\[h(n_k+1) = \frac{m(n_k+1)}{(n_k+1)^{1/q}} \geq \frac{m(n_k)}{(n_k+1)^{1/q}} = \frac{h(n_k)n_k^{1/q}}{(n_k+1)^{1/q}}\]
Since $\lim_{k\to\infty} h_{nk} = \infty$ and
\[\lim_{k\to\infty} \frac{n_k^{1/q}}{(n_k+1)^{1/q}} = 1\]
This implies $\lim_{k\to\infty} h(n_k+1)=\infty$. Now, there are two possibilities. If $\{h(n_k)\}$ contains an infinite number of odd terms, this directly provides a subsequence of odd terms converging to infinity as well. If instead $\{h(n_k)\}$ contains only a finite number of odd terms, then it contains an infinite number of even terms. Therefore we can find an infinite number of odd terms in the subsequence $l_k = n_k+1$ such that $\lim_{k\to\infty} h(l_k)=\infty$. Together this implies
\[\limsup_{n~odd;\\~n\to\infty} \frac{\E[U^n]^{1/n}}{n^{1/q}} = \infty\]
Furthermore, since $f(x)=Kx$ is a continuous, increasing function for any $K>0$, 
\[\limsup_{n~odd;\\~n\to\infty} \frac{\E[U^n]^{1/n}}{K n^{1/q}} = \infty\]
Finally, since there must be an infinite number of terms greater than one, and $f(x)=a^x$ is continuous and increasing,
\[\limsup_{n~odd;\\~n\to\infty} \left(\frac{\E[U^n]^{1/n}}{K n^{1/q}}\right)^n = \infty\]
Combining the results for the $B$ and $A$ terms produces the desired contradiction.
\begin{align*}
\limsup_{n~odd;~n\to\infty} \frac{\E\left[A^n e^{A}\right]}{\E\left[B^n e^{-B}\right]} &\geq \limsup_{n~odd;~n\to\infty}\frac{\E[U^n] K_A}{K_1^n n^{n/q} K_B} = \infty\\
\liminf_{n~odd;~n\to\infty} \frac{\E\left[B^n e^{-B}\right]}{\E\left[A^n e^{A}\right]} &= 0
\end{align*}
This means that for any value of $\Pr(X<0)>0$ we can produce an odd $n$ such that $\Pr(Y=n)$ cannot have a nonnegative value. Since this violates Definition \ref{def:valid-pmf}(i), we conclude that if $B$ is q-subweibull, then $A$ must also be q-subweibull. 
% In other words, $A$ must not have a heavier tail than $B$.
\end{proof}

Although we have established that the left tail of the candidate distribution must be subexponential, and no heavier than the right tail (in a subweibull sense), we have left open the possibility that the two tails could be of a similar order, for example if they both decay at an exponential but not strictly subexponential rate. This is further explored in a later section. First we detail necessary and sufficient conditions based on the BLT of the mixing measure.

\subsection{Conditions based on the bilateral Laplace-Stieltjes transform}

The class of valid mixing distributions can be characterized by the following properties of the BLT:
\begin{proposition}
\label{prop:valid-blt}
Let $X$ be a random variable with distribution function $F$. The mixed Poisson distribution $Poi\bigwedge F$ exists iff $X$ has a BLT $\mathcal{L}_X(t)$ that is completely monotone for $t\in[0,1]$, i.e.:
\begin{enumerate}[label=(\roman*)]
 \item $\mathcal{L}_X(t)$ is continuous for $t\in[0,1]$
 \item For all $k\in \mathbb{N}_0$, finite derivatives satisfy $(-1)^k\mathcal{L}_X^{(k)}(t)\geq 0$ for $t\in (0,1)$.
\end{enumerate}
\end{proposition} 
\begin{proof}
First, assume $\mathcal{L}_X(t)$ satisfies the stated properties. We will show that $G(z)=\mathcal{L}_X(1-z)$ is a valid PGF. First, that $\mathcal{L}_X(t)$ is a BLT implies $\mathcal{L}_X(0)=1$ hence $G(1)=1$. Continuity of $\mathcal{L}_X(t)$ on $t\in[0,1]$ implies continuity of $G(z)$ on $z\in[0,1]$. Complete monotonicity of $\mathcal{L}_X(t)$ on $t\in(0,1)$ implies absolute monotonicity of $G(z)$ on $z\in (0,1)$. The converse is equally straightforward: if $G(z)$ is a valid PGF of a mixed Poisson distribution, then the mixing distribution must have a BLT with the stated properties.
\end{proof}
Bernstein's theorem\cite{bernsteinFonctionsAbsolumentMonotones1929,widderLaplaceTransform2010} holds that the class of completely monotone functions with nonnegative domain is equivalent to the class of one-sided Laplace-Stieltjes transforms (LST) of finite measures with nonnegative support. In other words, every nonnegative random variable has a LST that is completely monotone on $[0,\infty)$. Proposition \ref{prop:valid-blt} shows that the class of valid mixing distributions is considerably broader, permitting real-valued mixing distributions. 

\section{Examples of real-valued mixing distributions}
\subsection{Two point mixture}
The two point distribution is a mixture of a positive atom (point mass) at $b$, a negative atom at $-a$, and probabilities $\Pr(X=-a)=p$ and $\Pr(X=b)=(1-p)$ where $a,b>0$. The Rademacher distribution is a special case. Let $\phi=p/(1-p)$ be the odds of the negative atom.
\begin{proposition}
\label{prop:mpoi-2pt}
The mixed Poisson- two point distribution with negative atom at $-a$, positive atom at $b$, and odds of negative atom $\phi$ exists if $b\geq a$ and $(b/a)\exp(-a-b)\geq \phi$.
The PMF is given by
\[f(n)=(1-p)\frac{b^n \exp(-b)}{n!} + p\frac{(-a)^n \exp(a)}{n!}\]
\end{proposition}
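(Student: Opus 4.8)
The plan is to read off the PMF directly from Equation~\ref{eq:mpoi-pmf-simple} and then certify validity through Theorem~\ref{thm:mpoi-pmf-suff}. Since $X$ takes only the two values $-a$ (with probability $p$) and $b$ (with probability $1-p$), the expectation in Equation~\ref{eq:mpoi-pmf-simple} is just a two-term sum, $\E[X^n e^{-X}] = p(-a)^n e^{a} + (1-p)b^n e^{-b}$, and dividing by $n!$ gives precisely the claimed formula. So the substantive part is showing that this $f$ is a valid PMF under the two stated constraints; I will assume the genuinely interesting case $p\in(0,1)$, so that $\Pr(X<0)>0$ and $\phi$ is well defined.

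To apply Theorem~\ref{thm:mpoi-pmf-suff} I would take the degenerate nonnegative random variables $A\equiv a$ and $B\equiv b$, with $\Pr(X<0)=p$ and $\Pr(X\geq 0)=1-p$. Condition~(ii) is immediate, since $\E[\exp(2A)]=e^{2a}<\infty$. Condition~(i), for odd $n$, becomes $a^n e^{a}\,p \leq b^n e^{-b}\,(1-p)$, i.e.\ $\phi \leq (b/a)^n e^{-a-b}$ for every odd $n\geq 1$.

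The one step requiring thought is reducing this infinite family of inequalities to a single one. When $b\geq a$ the map $n\mapsto (b/a)^n$ is nondecreasing, so over odd $n\geq 1$ the right-hand side $(b/a)^n e^{-a-b}$ is minimized at $n=1$, and the whole family collapses to the hypothesis $(b/a)e^{-a-b}\geq\phi$. Thus both conditions of Theorem~\ref{thm:mpoi-pmf-suff} hold and the mixed distribution exists. It is worth remarking that the constraint $b\geq a$ is not decorative: if $b<a$ then $(b/a)^n e^{-a-b}\to 0$, so no positive $\phi$ can satisfy all the odd-$n$ inequalities, and for a true two-point mixture ($p\in(0,1)$) the family fails to exist; hence the given sufficient condition is essentially sharp.

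As a cross-check one can bypass the theorem and verify Definition~\ref{def:valid-pmf} by hand: for even $n$ both terms of $f(n)$ are nonnegative, for odd $n$ nonnegativity reduces to exactly the inequalities above, and $\sum_n f(n) = p\sum_n (-a)^n e^{a}/n! + (1-p)\sum_n b^n e^{-b}/n! = p\cdot 1 + (1-p)\cdot 1 = 1$, where no Fubini subtlety arises because this is a finite mixture of two absolutely convergent series. I do not anticipate a real obstacle here; the only point demanding care is that condition~(i) must be enforced for \emph{every} odd $n$, together with the observation that $b\geq a$ is precisely what makes $n=1$ the binding case.
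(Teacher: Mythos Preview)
Your proposal is correct and follows essentially the same route as the paper: verify the two conditions of Theorem~\ref{thm:mpoi-pmf-suff} with the degenerate $A\equiv a$, $B\equiv b$, note that (ii) is immediate, and reduce (i) to $(b/a)^n e^{-a-b}\geq\phi$ for all odd $n$, which collapses to the $n=1$ case because $b\geq a$ makes $(b/a)^n$ nondecreasing. The paper phrases this last step as an induction rather than a minimization, but the content is identical; your sharpness remark and direct cross-check are pleasant additions.
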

\begin{proof}
We will show that the provided constraints are sufficient to satisfy the requirements of Theorem \ref{thm:mpoi-pmf-suff}. Condition $(ii)$ is trivially satisfied since $\E[\exp(2 A)] = \exp(2a)<\infty$. For condition $(i)$, we must have for all odd $n\in\mathbb{N}$,
\begin{align*}
% 0&\leq \E\left[B^n e^{-B}\right]\Pr(X\geq 0)-\E\left[A^n e^{A}\right]\Pr(X<0)\\
(1-p)b^n e^{-b}- p a^n e^a &\geq 0\\
(b/a)^n e^{-a-b}&\geq \frac{p}{1-p}=\phi
\end{align*}
Let $h(n)=(b/a)^n\exp(-a-b)$. By assumption $h(1)\geq\phi$. Now assume $h(n)\geq \phi$. We have $h(n+1)=(b/a)h(n)\geq h(n)\geq \phi$ since $b\geq a$. Therefore by induction condition $(i)$ is satisfied for all $n\in\mathbb{N}$.
\end{proof}
Proposition \ref{prop:mpoi-2pt} shows that the mixing distribution of a mixed Poisson may include a negative atom arbitrarily far from zero (with suitably small probability mass). Alternatively, the mass of the negative component can be arbitrarily close to one if the atom is close enough to zero. The PMF can exhibit stark multimodality (Figure \ref{fig:mpoi-2pt-pmf}).

\begin{figure}[tb]
\centering
\subfloat[Mixing distribution]{
  \includegraphics[width=.49\linewidth]{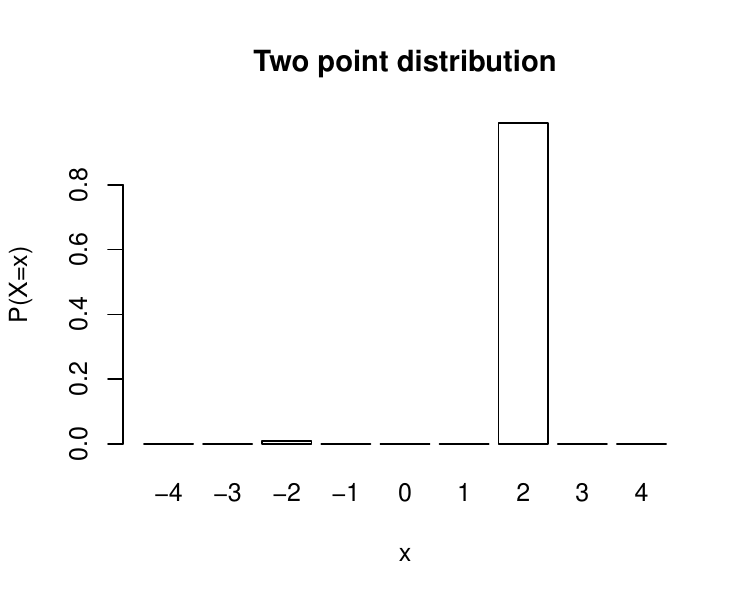}
}
\subfloat[Mixed Poisson distribution]{
  \includegraphics[width=.49\linewidth]{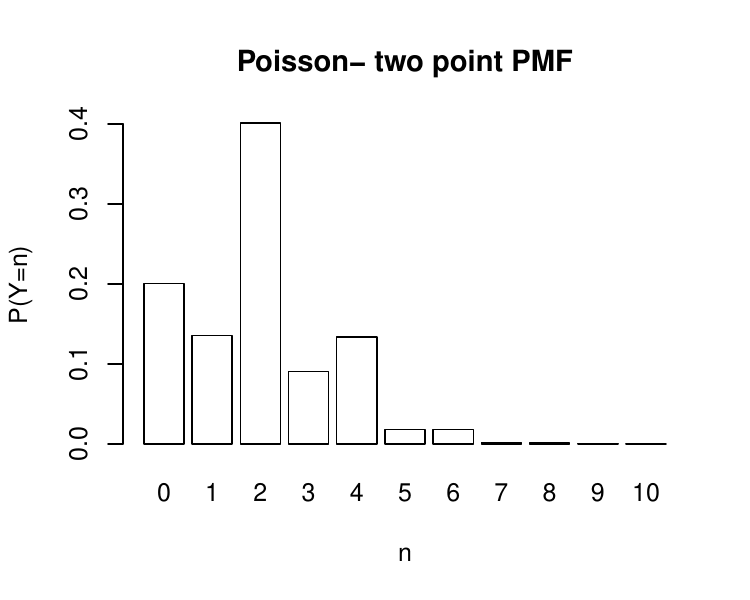}
}
\caption{\small Mixed Poisson- two point probability mass function with positive atom $b=2$, negative atom $a=-2$, and probability of negative atom $.009$.}
\label{fig:mpoi-2pt-pmf}
\end{figure}

\subsection{Asymmetric Laplace mixture}
The asymmetric Laplace distribution has $A = [-X\cond X<0]$ distributed as $Exp(\lambda_1)$, $B=[X\cond X\geq 0]$ distributed as $Exp(\lambda_2)$, and $\Pr(X<0)=p$. We refer to the rates $\lambda_1$ and $\lambda_2$ as the left and right tail parameters, respectively. Let $\phi=p/(1-p)$ be the odds of the negative tail.
\begin{proposition}
\label{prop:mpoi-asym-laplace}
The mixed Poisson- asymmetric Laplace distribution with left and right tail parameters $\lambda_1,\lambda_2$ and odds of negative tail $\phi$ exists if $\lambda_1\geq \lambda_2+2$ and 
\[\phi\leq \frac{\lambda_2}{\lambda_1}\left(\frac{\lambda_1-1}{\lambda_2+1}\right)^2\]
The PMF is given by
\[f(n)=(1-p)\frac{\lambda_2}{(\lambda_2+1)^{n+1}} + p(-1)^n\frac{\lambda_1}{(\lambda_1-1)^{n+1}}\]
\end{proposition}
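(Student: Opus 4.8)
The plan is to verify the two sufficient conditions of Theorem \ref{thm:mpoi-pmf-suff} for the asymmetric Laplace mixing distribution, exactly as was done for the two-point mixture in Proposition \ref{prop:mpoi-2pt}. Here $A\sim Exp(\lambda_1)$ and $B\sim Exp(\lambda_2)$, so the moments we need are the standard exponential moments $\E[A^n] = n!/\lambda_1^n$ and $\E[B^n] = n!/\lambda_2^n$, and more usefully the tilted moments $\E[A^n e^{A}]$ and $\E[B^n e^{-B}]$, which come from the moment generating function of an exponential: $\E[A^n e^{tA}] = n!\,\lambda_1/(\lambda_1-t)^{n+1}$ for $t<\lambda_1$, and similarly $\E[B^n e^{-B}] = n!\,\lambda_2/(\lambda_2+1)^{n+1}$. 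This immediately yields the claimed PMF formula once these are substituted into Equation \ref{eq:mpoi-pmf-complex}. Condition (ii), $\E[e^{2A}]<\infty$, holds precisely when $\lambda_1 > 2$; note this is implied by the hypothesis $\lambda_1 \geq \lambda_2 + 2 > 2$, so (ii) is automatic. For the tilted moment $\E[A^n e^{A}]$ to even be finite we also need $\lambda_1 > 1$, again implied.

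Next I would reduce condition (i) of Theorem \ref{thm:mpoi-pmf-suff} to an inequality in $n$. Plugging the exponential tilted moments in, condition (i) becomes, for all odd $n\in\mathbb{N}$,
\[
\phi\cdot \frac{n!\,\lambda_1}{(\lambda_1-1)^{n+1}} \;\leq\; \frac{n!\,\lambda_2}{(\lambda_2+1)^{n+1}},
\]
which after cancelling $n!$ and rearranging is
\[
\phi \;\leq\; \frac{\lambda_2}{\lambda_1}\left(\frac{\lambda_1-1}{\lambda_2+1}\right)^{n+1}.
\]
Define $h(n) = \frac{\lambda_2}{\lambda_1}\big(\frac{\lambda_1-1}{\lambda_2+1}\big)^{n+1}$. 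The hypothesis on $\phi$ is exactly $\phi \leq h(1)$. So it suffices to show $h$ is nondecreasing in $n$, i.e. that the base $(\lambda_1-1)/(\lambda_2+1) \geq 1$. But $\lambda_1 - 1 \geq \lambda_2 + 1$ is equivalent to $\lambda_1 \geq \lambda_2 + 2$, which is precisely the first hypothesis. Hence $h(n+1) = \frac{\lambda_1-1}{\lambda_2+1}\,h(n) \geq h(n) \geq \phi$, and a one-line induction (mirroring the argument in Proposition \ref{prop:mpoi-2pt}) establishes condition (i) for all $n$, odd or not.

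I do not anticipate a serious obstacle here — the computation is routine once the correct tilted-moment identity for the exponential distribution is in hand, and the structure of the argument is identical to Proposition \ref{prop:mpoi-2pt} with the ratio $b/a$ replaced by $(\lambda_1-1)/(\lambda_2+1)$. The only point requiring a little care is bookkeeping the two distinct conditions "$\lambda_1 > 1$" (needed for $\E[A^n e^A] < \infty$ and hence for the PMF formula to make sense) and "$\lambda_1 > 2$" (needed for condition (ii)), and then observing that both are subsumed by $\lambda_1 \geq \lambda_2 + 2$; I would state this explicitly so the reader sees why no separate hypothesis is needed. A secondary check worth including is that the hypothesis on $\phi$ forces $\phi$ genuinely small — e.g. since $(\lambda_1-1)/(\lambda_2+1)\ge 1$ we get $\phi \le \lambda_2/\lambda_1 \cdot (\text{something} \ge 1)$, which does not by itself bound $\phi$ below $1$, so one should note that $p<1$ is still required as part of the definition of the mixing distribution and is not automatically implied.
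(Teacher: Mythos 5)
Your proposal is correct and follows essentially the same route as the paper: verify condition (ii) of Theorem \ref{thm:mpoi-pmf-suff} from $\lambda_1\geq\lambda_2+2>2$, compute the tilted exponential moments $\E[A^n e^{A}]=n!\,\lambda_1/(\lambda_1-1)^{n+1}$ and $\E[B^n e^{-B}]=n!\,\lambda_2/(\lambda_2+1)^{n+1}$, reduce condition (i) to $\phi\leq\frac{\lambda_2}{\lambda_1}\bigl(\frac{\lambda_1-1}{\lambda_2+1}\bigr)^{n+1}$, and close by induction using $(\lambda_1-1)/(\lambda_2+1)\geq 1$. Your explicit bookkeeping of the $\lambda_1>1$ versus $\lambda_1>2$ requirements is a small clarity bonus over the paper's version but does not change the argument.
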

\begin{proof}
We will show that the provided constraints are sufficient to satisfy the requirements of Theorem \ref{thm:mpoi-pmf-suff}. Condition $(ii)$ is satisfied since $\E[\exp(2A)]<\infty$ for $\lambda_1>2$ and by assumption $\lambda_1\geq \lambda_2+2 > 2$. Note that
\[\E\left[A^n e^{A}\right] = \lambda_1\int_0^\infty x^n\exp(-(\lambda_1-1)x)dx= \lambda_1 \frac{\Gamma(n+1)}{(\lambda_1-1)^{n+1}}\]
and
\[\E\left[B^n e^{-B}\right]=\lambda_2 \int_0^\infty x^n\exp(-(\lambda_2+1)x)dx = \lambda_2\frac{\Gamma(n+1)}{(\lambda_2+1)^{n+1}} \]
so condition $(i)$ is equivalent to
\begin{align*}
\lambda_2\frac{\Gamma(n+1)}{(\lambda_2+1)^{n+1}}(1-p)&\geq\lambda_1 \frac{\Gamma(n+1)}{(\lambda_1-1)^{n+1}}p\\
\frac{\lambda_2}{\lambda_1}\left(\frac{\lambda_1-1}{\lambda_2+1}\right)^{n+1}&\geq\frac{p}{1-p}=\phi
\end{align*}
By assumption this is true for $n=1$. Assuming it is true for $n$ we can see this implies the case of $n+1$ whenever $(\lambda_1-1)/(\lambda_2+1)\geq 1$, which is implied by the assumption of $\lambda_1\geq \lambda_2+2$. By induction this implies the inequality is satisfied for all $n\in\mathbb{N}$. 
\end{proof}
Proposition \ref{prop:mpoi-asym-laplace} shows that the mixing distribution of a mixed Poisson may include support on all negative numbers, with a left tail decaying at an exponential rate, so long as the right tail decays sufficiently slowly and the odds of the negative tail are small enough. Alternatively, the mass of the negative component can be arbitrarily close to one if the left tail decays sufficiently rapidly ($\lambda_1$ is large). The PMF of this family can also be multimodal (Figure \ref{fig:mpoi-asym-laplace-pmf}).

\begin{figure}[tb]
\centering
\subfloat[Mixing distribution]{
  \includegraphics[width=.49\linewidth]{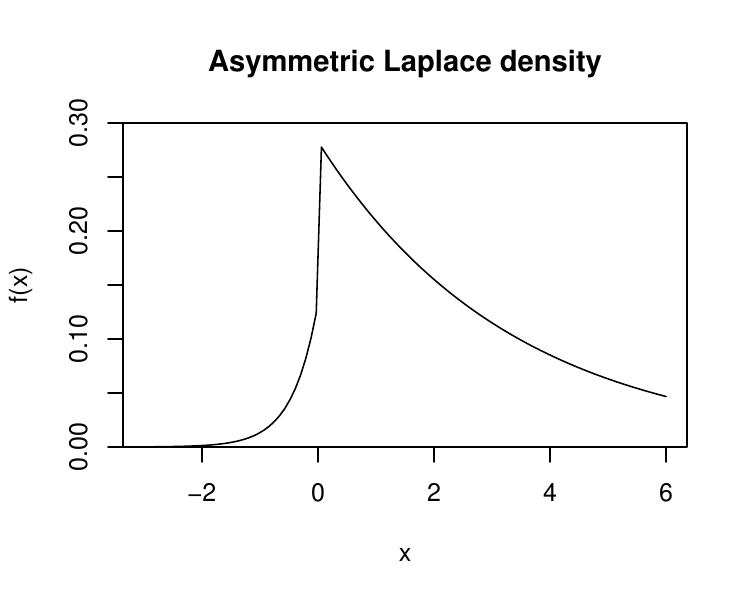}
}
\subfloat[Mixed Poisson distribution]{
  \includegraphics[width=.49\linewidth]{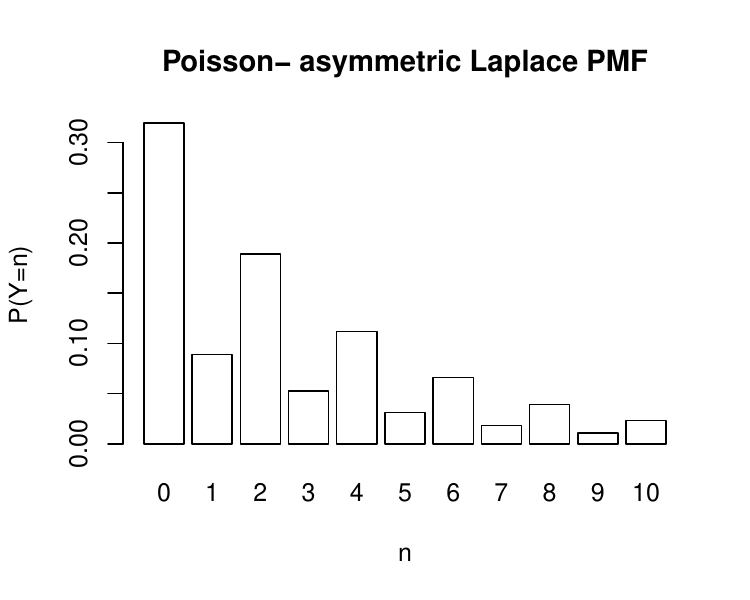}
}
\caption{\small Mixed Poisson- asymmetric Laplace probability mass function with left tail rate $\lambda_1=2.3$, right tail rate $\lambda_2=0.3$ and probability of negative tail $p=.058$.}
\label{fig:mpoi-asym-laplace-pmf}
\end{figure}

% \subsection{Point mass and exponential mixture}
% \subsection{Half Gaussian/ half exponential mixture}
\subsection{Poisson-extreme stable}
\subsubsection{Stable distributions}
The central limit theorem shows that normalized sums of random variables with finite variance converge to the Gaussian distribution. In the case of infinite variance, the limiting distribution is instead a stable distribution \cite{nolan:2018,samorodnitskyStableNonGaussianRandom1994}. 
\begin{definition}\cite{nolan:2018}
\label{def:stable-cf}
A random variable $X\sim S_\alpha(\sigma,\beta,\delta)$ is stable if and only if it has the following characteristic function:
\begin{equation*}
  \E\left[e^{itX}\right] =
    \begin{cases}
      \exp\left[it\delta - \sigma^\alpha\vert t \vert^\alpha\left(1-i\beta\sign(t)\tan\frac{\alpha\pi}{2}\right)\right] & \alpha\neq 1\\
      \exp\left[it\delta - \sigma\vert t \vert \left(1+i\beta\frac{2}{\pi}\sign(t)\log\vert t\vert\right)\right] & \alpha=1.
    \end{cases}
\end{equation*}
where the parameters are $\delta\in \mathbb{R}$ for location, $\beta\in[-1,1]$ for skewness, and $\alpha\in (0,2]$ as the index parameter. The scale parameter is constrained to $\sigma\geq 0$ for $\alpha=1$ and $\sigma>0$ otherwise. $X$ is \textit{strictly stable} if and only if either $\alpha\neq 1$ and $\delta=0$ or $\alpha=1$ and $\sigma\beta=0$.
\end{definition}
The constraint on $\sigma$ ensures that degenerate distributions with $\sigma=0$ are considered to have $\alpha=1$. The case of $\alpha=2$ is the Gaussian family. Smaller values of $\alpha$ lead to heavier tails. A nondegenerate stable random variable with $\alpha<2$ has infinite variance, and with $\alpha\leq 1$ has an infinite mean.
%Note: the samorodnitsky parameterization is consistent with wikipedia. This is the "1-parameterization" of JP Nolan's book as opposed to the "0-parameterization".

The BLT is not finite for most non-Gaussian stable distributions because they have heavy tails on both sides. However, in the special case of \textit{extreme stable} distributions having $\beta=1$ (maximally skewed to the right), the left tail becomes strictly subexponential \cite{nolan:2018} and the BLT is given \cite{guptaMultiscalingPropertiesSpatial1990,samorodnitskyStableNonGaussianRandom1994} by
\begin{equation}
\label{eq:stable-laplace-transform}
  \mathcal{L}_X(t) = \E[\exp(-tX)] = 
    \begin{cases}
      \exp\left(-t\delta-\sec\left(\frac{\pi\alpha}{2}\right)\sigma^\alpha t^\alpha\right) & \alpha\neq 1\\
      \exp\left(-t\delta+\sigma\frac{2}{\pi}t\log t \right) & \alpha=1.
    \end{cases}
\end{equation}
which is finite whenever $t\geq 0$. Note that $\sec\frac{\pi\alpha}{2}$ is positive if $0<\alpha<1$ and negative if $1<\alpha\leq 2$. 

\subsubsection{Mixed Poisson-extreme stable}

All nondegenerate stable distributions are absolutely continuous \cite{nolan:2018}. It seems natural to consider whether stable distributions can be used to form Poisson mixtures for modeling discrete data. When $\alpha<1$ this is straightforward because the extreme stable family has support only on the positive reals (it is often called the \textit{positive stable} family). However, for $\alpha \in [1,2]$, the extreme stable family has support on the entire real line. 
Consider the function $G_Y(z) = \mathcal{L}_X(1-z)$. If this function satisfies the requirements of a PGF, then $Y$ is a mixed Poisson distribution with a skewed stable mixing distribution. 
\begin{theorem} \textit{Mixed Poisson-stable family}
\label{thm:mpoi-stable}
Let $X$ be a stable random variable with BLT as in Equation \ref{eq:stable-laplace-transform}. If the location parameter $\delta$ satisfies the following constraint, then $X$ can be used as the mixing parameter in a mixed Poisson distribution. 
\begin{equation}
\label{eq:delta-lim}
\delta \geq 
\begin{cases}
-\alpha \sec\left(\frac{\pi\alpha}{2}\right)\sigma^\alpha & \alpha\neq 1\\
\sigma\frac{2}{\pi} & \alpha=1
\end{cases}
\end{equation}
The PGF of the resulting mixed Poisson family is
\begin{equation}
\label{eq:mpoi-stable-pgf}
G(z)=
  \begin{cases}
    \exp\left((z-1)\delta-\sec\left(\frac{\pi\alpha}{2}\right)\sigma^\alpha(1-z)^\alpha\right) & \alpha\neq 1\\
    \exp\left((z-1)\delta+\sigma\frac{2}{\pi}(1-z)\log(1-z)\right) & \alpha=1.
  \end{cases}
\end{equation}
\end{theorem}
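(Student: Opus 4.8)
The plan is to verify that $G(z) = \mathcal{L}_X(1-z)$ satisfies the three conditions of Lemma \ref{lem:valid-pgf}: $G(1)=1$, continuity on $[0,1]$, and absolute monotonicity on $(0,1)$. The first two are essentially immediate from the explicit formula in Equation \ref{eq:stable-laplace-transform}: substituting $z=1$ gives $\mathcal{L}_X(0)=1$, and $G$ is a composition of continuous functions on $[0,1]$ (noting that $(1-z)\log(1-z)\to 0$ as $z\to 1^-$ handles the $\alpha=1$ endpoint). So the entire content of the theorem is the absolute monotonicity, i.e. showing $G^{(k)}(z)\geq 0$ for all $k\geq 1$ and all $z\in(0,1)$, and this is where the location constraint \eqref{eq:delta-lim} must do its work.

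For the absolute monotonicity, I would write $G(z) = \exp(\psi(z))$ where $\psi(z) = (z-1)\delta - \sec(\frac{\pi\alpha}{2})\sigma^\alpha(1-z)^\alpha$ (and analogously for $\alpha=1$). A clean route is: $G$ is absolutely monotone on $(0,1)$ iff $G$ is the composition $\exp\circ\psi$ where $\psi$ itself has a nonnegative first derivative and all higher derivatives of $\psi$ on $(0,1)$ are... actually the cleanest sufficient condition is that $\psi'(z)$ is absolutely monotone on $(0,1)$ together with $\psi'(z)\geq 0$; then since $G' = \psi' G$ and products/compositions built from absolutely monotone functions with nonnegative coefficients stay absolutely monotone, one gets $G^{(k)}\geq 0$ by induction (Leibniz: $G^{(k+1)} = \sum \binom{k}{j}\psi^{(j+1)} G^{(k-j)}$, all terms nonnegative once $\psi'$ and hence all $\psi^{(j)}$, $j\geq 1$, are nonnegative on $(0,1)$ and $G^{(k-j)}\geq 0$ by the inductive hypothesis). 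So it suffices to show (a) every derivative $\psi^{(j)}(z)\geq 0$ for $j\geq 2$ and $z\in(0,1)$ — this is automatic and sign-independent of $\delta$ because $\frac{d^j}{dz^j}(1-z)^\alpha = \alpha(\alpha-1)\cdots(\alpha-j+1)(-1)^j(1-z)^{\alpha-j}$, and working out the sign together with the sign of $-\sec(\frac{\pi\alpha}{2})$ shows the whole term is nonnegative for $j\geq 2$ across both regimes $\alpha\in(0,1)$ and $\alpha\in(1,2]$ (the $j=2$ case is exactly where $\alpha=2$ being the boundary matters, and $\alpha=1$ gives $(1-z)\log(1-z)$ whose second and higher derivatives are $\propto (1-z)^{-j+1}>0$); and (b) $\psi'(z)\geq 0$ on $(0,1)$, which is the only place $\delta$ enters. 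Computing $\psi'(z) = \delta + \alpha\sec(\frac{\pi\alpha}{2})\sigma^\alpha(1-z)^{\alpha-1}$ for $\alpha\neq 1$: this is monotone in $z$, so its infimum over $(0,1)$ is attained at an endpoint, and checking the two cases ($\alpha<1$: the term is nonnegative, decreasing, infimum at $z\to 1$ is $\delta\geq 0$ which follows from \eqref{eq:delta-lim}; $1<\alpha<2$: the coefficient is negative, the term is increasing from $-\alpha\sec(\frac{\pi\alpha}{2})\sigma^\alpha$ toward $0$, wait—need to track carefully) shows the binding constraint is exactly $\delta \geq -\alpha\sec(\frac{\pi\alpha}{2})\sigma^\alpha$. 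The $\alpha=1$ case is handled the same way with $\psi'(z) = -\delta - \sigma\frac{2}{\pi}(\log(1-z)+1)$, whose supremum of the subtracted part as $z\to 1^-$ forces $\delta\geq\sigma\frac{2}{\pi}$, matching \eqref{eq:delta-lim}.

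The main obstacle, and the step I would spend the most care on, is the sign bookkeeping in part (b): one must simultaneously track the sign of $\sec(\frac{\pi\alpha}{2})$ (positive for $\alpha\in(0,1)$, negative for $\alpha\in(1,2)$, with $\alpha=2$ giving $\sec\pi=-1$ and the $(1-z)^{\alpha-1}$ factor becoming the constant $(1-z)$), determine where $\psi'$ attains its infimum on the open interval $(0,1)$, and confirm that the resulting bound on $\delta$ is precisely \eqref{eq:delta-lim} rather than something weaker or stronger. A secondary subtlety is justifying termwise that $\exp\circ\psi$ inherits absolute monotonicity; I would state this as a small lemma (if $\psi$ is $C^\infty$ on an interval with $\psi^{(j)}\geq 0$ for all $j\geq 1$ there, then $e^\psi$ has all derivatives $\geq 0$ there) and prove it by the Leibniz induction sketched above, which is routine. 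Finally I would remark that when $\delta$ meets \eqref{eq:delta-lim} with equality, $G'$ vanishes at the endpoint $z=1$ but remains strictly positive on the open interval, so the PGF is still valid, and that Equation \eqref{eq:mpoi-stable-pgf} is then just $\mathcal{L}_X(1-z)$ written out, which is the PGF by Lemma \ref{lem:pgf-mpoi}.
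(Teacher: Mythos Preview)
Your approach is essentially the paper's: write $G'=G\,\psi'$, show $\psi^{(j)}\geq 0$ on $(0,1)$ for all $j\geq 1$, and conclude $G^{(k)}\geq 0$ by the Leibniz induction you sketch. The strategy is correct and matches the paper.

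However, your case analysis in part~(b) contains sign/monotonicity slips that would derail the argument as written. For $\alpha<1$ you have $\sec(\tfrac{\pi\alpha}{2})>0$ and exponent $\alpha-1<0$, so $(1-z)^{\alpha-1}$ is \emph{increasing} on $(0,1)$; hence $\psi'$ is increasing with infimum at $z=0$, not $z\to 1$. The binding condition is $\psi'(0)=\delta+\alpha\sec(\tfrac{\pi\alpha}{2})\sigma^\alpha\geq 0$, which is exactly \eqref{eq:delta-lim} and permits negative $\delta$; your claimed conclusion ``$\delta\geq 0$'' is both wrong and not implied by \eqref{eq:delta-lim}. For $\alpha=1$, differentiating $(z-1)\delta+\sigma\tfrac{2}{\pi}(1-z)\log(1-z)$ gives $\psi'(z)=\delta-\sigma\tfrac{2}{\pi}\bigl(1+\log(1-z)\bigr)$, not $-\delta-\cdots$; this is again increasing, with infimum $\psi'(0)=\delta-\sigma\tfrac{2}{\pi}$, so the constraint binds at $z=0$, not $z\to 1^-$.

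The clean fix, which is what the paper does, is to note that you have already shown $\psi''(z)\geq 0$ in every regime as part of (a); hence $\psi'$ is nondecreasing on $[0,1)$ and $\psi'(z)\geq\psi'(0)$. Then $\psi'(0)\geq 0$ is a single inequality that, when written out, is \emph{precisely} \eqref{eq:delta-lim} in all three cases $\alpha<1$, $\alpha=1$, $\alpha\in(1,2]$. With that correction your proof goes through verbatim.
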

\begin{proof}
We must show that $G(z)$ has the properties of Lemma \ref{lem:valid-pgf}. It is obvious that $G(1)=1$, $G(z)$ is continuous for $z\in [0,1]$, and $G(0)\geq 0$ for all $\alpha\in(0,2]$. It remains to show that the derivatives $G^{(k)}(z)$ are finite and nonnegative for all $k\in\mathbb{N}$ and $z\in (0,1)$. The first derivative is
\[G'(z) = G(z)r(z)\]
where 
\begin{align*}
r(z) = 
\begin{cases}
  \delta + \alpha \sec\left(\frac{\pi\alpha}{2}\right)\sigma^\alpha(1-z)^{\alpha-1} & \alpha\neq 1\\
  \delta+\sigma\frac{2}{\pi}\left(-1-\log(1-z)\right) & \alpha=1.
\end{cases}
\end{align*}
The constraints of Equation \ref{eq:delta-lim} imply $r(0)\geq 0$, which implies $G'(0)\geq 0$. Since $r(z)$ is an increasing function for $z\in [0,1)$, this implies $r(z)\geq r(0)\geq 0$ for all $z\in (0,1)$ as required.
We will now show that all derivatives of $r(z)$ are nonnegative for $z\in(0,1)$, separately for $\alpha\neq 1$ and $\alpha=1$.

\textbf{Case of} $\mathbf{\alpha\neq 1}$. 
The remaining derivatives of $r(z)$ are
\begin{align*}
r'(z) &= \alpha (\alpha-1) \sec\left(\frac{\pi\alpha}{2}\right)\sigma^\alpha(1-z)^{\alpha-2}(-1)\\
r^{(k)}(z) &= \alpha (\alpha-1)\ldots (\alpha-k)\sec\left(\frac{\pi\alpha}{2}\right)\sigma^\alpha(1-z)^{\alpha-k-1}(-1)^k\\
&= \alpha(1-\alpha)\ldots(k-\alpha)\sec\left(\frac{\pi\alpha}{2}\right)\sigma^\alpha(1-z)^{\alpha-k-1}
\end{align*}
It is clear that $r^{(k)}(z)$ is finite for all $z<1$. We will show that $r^{(k)}(0)\geq 0$ by induction. For $\alpha<1$, all terms are positive. For $\alpha>1$ the negative $(1-\alpha)$ term cancels with the negative secant term. Therefore $r'(0)\geq 0$ for all $\alpha\neq 1$. Now assume $r^{(k)}(0)\geq 0$ for some $k\geq 1$. We have $r^{(k+1)}(z) = r^{(k)}(z)(k+1-\alpha)(1-z)^{-1}$ and $r^{(k+1)}(0) = r^{(k)}(0)(k+1-\alpha)(1)$. All terms are positive. Therefore $r^{(k)}(0)\geq 0$ for all $k\in\mathbb{N}$. Finally, since $r^{(k)}(z)$ is increasing in $z$ for $z\in (0,1)$, we have that $r^{(k)}(z)\geq r^{(k)}(0)\geq 0$ as well.

\textbf{Case of} $\mathbf{\alpha= 1}$. 
The remaining derivatives of $r(z)$ are
\begin{align*}
r'(z) &= \sigma\frac{2}{\pi}(1-z)^{-1}\\
r^{(k)}(z) &= \sigma\frac{2}{\pi}(1-z)^{-k}(k-1)!
\end{align*}
These are all finite for $z<1$ and $r^{(k)}(0) = \sigma\frac{2}{\pi}(k-1)!\geq 0$ for all $k\in\mathbb{N}$. Again, $r^{(k)}(z)\geq r^{(k)}(0)\geq 0$ for all $z\in (0,1)$.

Returning to the original function $G(z)$ we have (via the general Leibniz rule),
% \begin{align*}
% G^{(2)}(z) &= G(t)r(z)^2 + G(z)r'(z) = G'(z)r(z) + G(z)r'(z)\\
% G^{(3)}(z) &= G^{(2)}(z)r(z) + G'(z)r'(z) + G'(z)r'(z)+G(z)r^{(2)}(z)\\
% &= G^{(2)}(z)r(z) + 2G'(z)r'(z) + G(z)r^{(2)}(z)\\
% G^{(4)}(z) &= G^{(3)}(z)r(z) + G^{(2)}(z)r'(z) + 2G^{(2)}(z)r'(z) + 2G'(z)r^{(2)}(z) + G'(z)r^{(2)}(z) + G(z) r^{(3)}(z)\\
% &= G^{(3)}(z)r(z) + 3G^{(2)}(z)r'(z) + 3G'(z)r^{(2)}(z)+ G(z) r^{(3)}(z)\\
\[G^{(k+1)}(z) = \sum_{j=0}^{k} \binom{k}{j} G^{(k-j)}(z)r^{(j)}(z) \]
% \end{align*} 
We have already shown that $G(z)$ and $G'(z)$ are finite for $z<1$. If $G^{(k)}(z)$ is finite on the same range, then so is $G^{(k+1)}(z)$ since it is a finite combination of the lower derivatives and the $r^{(k)}(z)$ derivatives, which are all finite. By induction this implies $G^{(k)}(z)$ is finite for all $k\in\mathbb{N}$. As for nonnegativity, we have already shown $G(z)\geq 0$ and $G'(z)$ is nonnegative for $z\in (0,1)$ as long as the condition in Equation \ref{eq:delta-lim} is satisfied. Assume $G^{(k)}(z)\geq 0$. Then $G^{(k+1)}(z)$ is also since it consists of a finite combination of products of nonnegative terms. 
This shows $G(z)$ satisfies all the requirements of Lemma \ref{lem:valid-pgf}, therefore it is a valid PGF.
\end{proof}

\subsection{Numerical evaluation of Poisson-stable PMF}

The Poisson- extreme stable PMF can be obtained by repeated differentiation of the PGF. Fortunately, the derivatives have a relatively simple form. We will use a slightly different parameterization for convenience:
\[\gamma = \begin{cases}
-\sec\left(\frac{\alpha \pi}{2}\right)\sigma^\alpha & \alpha\neq 1\\
\sigma\frac{2}{\pi} & \alpha=1
\end{cases}\]
The constraints of Theorem \ref{thm:mpoi-stable} simplify to $\gamma\geq 0$ for $\alpha\geq 1$, $\gamma<0$ for $\alpha<1$, and $\delta\geq \alpha\gamma$. Define the ``R-function'' as
\begin{equation}
r(z) = \frac{d}{dz}\log G(z) = 
\begin{cases}
  \delta - \alpha \gamma(1-z)^{\alpha-1} & \alpha\neq 1\\
  \delta-\gamma\left(1+\log(1-z)\right) & \alpha=1.
\end{cases}
\end{equation}
Clearly $r(0)=\delta-\alpha\gamma$ for all $\alpha\in (0,2]$. For $\alpha\in (0,1]$ its derivatives ($k\geq 1$) at zero can be expressed as
\begin{equation}
\label{eq:rfunc-alpha01}
r^{(k)}(0) = 
\begin{cases}
-\alpha\gamma \frac{\Gamma(k-\alpha+1)}{\Gamma(1-\alpha)} & \alpha\in (0,1)\\
\gamma \Gamma(k) & \alpha=1\\
\alpha(\alpha-1)\gamma\frac{\Gamma(k+1-\alpha)}{\Gamma(2-\alpha)} & \alpha\in(1,2)
\end{cases}
\end{equation}
For $\alpha=2$ (Hermite distribution), $r'(0)=2\gamma$ and $r^{(k)}(0)=0$ for all $k\geq 2$. The PMF can be evaluated for $\alpha\in (0,2]$ by the following recursion:
\begin{align*}
\Pr(Y=0)&= \exp\left(-\delta + \mathbf{1}_{\alpha\neq 1}\gamma\right)\\
\Pr(Y=n)&= \frac{1}{n}\sum_{j=0}^{n-1}\frac{\Pr(Y=j)}{(n-1-j)!}r^{(n-1-j)}(0)
\end{align*}

We provide a numerically stable implementation in R as part of the companion code repository at \url{https://github.com/willtownes/mixed-poisson}. Since each term of the PMF requires evaluation of all preceding terms, the computational complexity is $\mathcal{O}(n^2)$.

\section{Discussion}
We have shown that the mixing distribution of a mixed Poisson family need not be restricted to have nonnegative support. Informally, the mixing distribution merely needs to have a light left tail and a small amount of probability mass on negative values. 

In the context of hierarchical models, the mixing distribution may be interpreted as a Bayesian prior. It is well known that one may use ``improper'' distributions as priors so long as the resulting posterior is proper. However, to our knowledge it has not been proposed to use proper distributions whose support is not matched to that of the parameter of interest. For example, consider the mixed binomial distribution. If the total count $N$ is fixed, the constraints on the mixing distribution for the success probability $p$ apply only to the first $N$ moments. One could therefore specify a mixed Binomial- Gaussian distribution for $N=2$. It would be interesting to explore how this perspective relates to other efforts to relax the assumption of a hierarchical model as a complete specification of the data generating process, such as robust Bayes \cite{millerRobustBayesianInference2019} and Bayesian estimating equations \cite{bissiriGeneralFrameworkUpdating2016}. 

In the field of stochastic processes, an overdispersed Poisson process can be produced by rescaling the time index using a subordinator. For example, the negative binomial process may be produced by subjecting a Poisson process to a gamma process subordinator \cite{kozubowskiDistributionalPropertiesNegative2009}. The subordinator is classically restricted to be a nonnegative process \cite{kallenbergFoundationsModernProbability2021}. Since the subordinator resembles a process analog of the mixing distribution, this raises the question of whether general, real-valued subordinators could be considered as well.

While the distributional families explored here are intriguing, they are not yet ready to be used for practical statistical applications. In future work we plan to develop numerical algorithms for evaluating mixed Poisson likelihoods, sampling, estimation, and inference. In the meantime we hope our results will inspire further research into flexible approaches to the specification of mixed distributions and analogous stochastic processes.

\backmatter

\bmhead{Acknowledgements}

The authors would like to thank Gennady Gorin, Arun Kumar Kuchibhotla, Siyuan Ma, Zack McCaw, Gonzalo Mena, Kimberly Sellers, Justin Silverman, Weijing Tang, Valerie Ventura, Larry Wasserman, Nathan Welch, Matt Werenski, and Mike Wu for helpful comments and suggestions.

\section*{Declarations}

\begin{itemize}
\item Funding- Not applicable
\item Conflict of interest/Competing interests- Not applicable
\item Ethics approval and consent to participate- Not applicable
\item Consent for publication- Not applicable
\item Data availability- Not applicable
\item Materials availability- Not applicable
\item Code availability- \url{https://github.com/willtownes/mixed-poisson}
\item Author contribution- everything by single author
\end{itemize}

%%===========================================================================================%%
%% If you are submitting to one of the Nature Portfolio journals, using the eJP submission   %%
%% system, please include the references within the manuscript file itself. You may do this  %%
%% by copying the reference list from your .bbl file, paste it into the main manuscript .tex %%
%% file, and delete the associated \verb+\bibliography+ commands.                            %%
%%===========================================================================================%%

% \bibliographystyle{natbib}
%\bibliography{/Users/townesf/documents/research/zotero_library}
\bibliography{references}
%\bibliography{sn-bibliography}% common bib file
%% if required, the content of .bbl file can be included here once bbl is generated
%%\input sn-article.bbl

\end{document}